\DeclareMathOperator{\E}{\mathbb{E}}
\newcommand{\email}[1]{\texttt{\href{mailto:#1}{#1}}}
\newcommand{\transpose}{^{\top}}
\newcommand{\Reals}{\mathbb{R}}
\newcommand{\eqD}{\stackrel{\mathcal{D}}{=}} 
\title{A Short Proof of Strassen's Theorem Using Convex Analysis}
\date{\today}
\author{Benjamin Armbruster\thanks{\email{armbrusterb@gmail.com}}
}
\begin{document}
\maketitle

\begin{abstract}
We give a simple proof of Strassen's theorem
on stochastic dominance using linear programming duality, 
without requiring measure-theoretic arguments.
The result extends to generalized inequalities
using conic optimization duality
and provides an additional, intuitive optimization formulation for stochastic dominance.
\end{abstract}



\section{Introduction}
Strassen's theorem (\citeyear{Strassen1965}) is a fundamental theorem in the theory of stochastic dominance.
It characterizes a stochastic dominance relationship
between two random variables 
as an almost sure comparison of the two on the same probability space.
Formally, the theorem states:

\newtheorem*{ST}{Strassen's Theorem}
\begin{ST}
$Z \succeq_{icv} Y$ ($Z \succeq_{cv} Y$) 
if and only if
there exists random variables $Y' \eqD Y$
and $Z' \eqD Z$
such that $Z'\geq \E[Y'|Z']$ 
($Z' = \E[Y'|Z']$) a.s.
\end{ST}

Here $\succeq_{icv}$ denotes dominance in 
increasing concave stochastic order,
also known as second order stochastic dominance;
$\succeq_{cv}$ denotes dominance in concave stochastic order;
and $\eqD$ denotes equality in distribution.
We say that random variable $Z$ dominates random variable $Y$
in increasing concave stochastic order, $Z \succeq_{icv} Y$,
if $\E[u(Z)]\geq \E[u(Y)]$
for all (weakly) increasing concave functions utility functions $u$.
Similarly, we say $Z \succeq_{cv} Y$
if $\E[u(Z)]\geq \E[u(Y)]$
for all concave functions $u$.

Stochastic dominance constraints have been used in optimization problems under uncertainty since \citet{Dentcheva2003}
and have grown in popularity since then (see the +100 papers citing \cite{Dentcheva2003}).
They are often used to ensure that the chosen decision is preferred to some benchmark action.
According to \citet{Lizyayev2012}, there are three categories of constraint formulations: 
distribution based, majorization, and revealed-preference type.  
Strassen's theorem is the basis of the majorization approach (e.g., \eqref{primal}).
The dual formulation in our proof (\eqref{dualprog},\eqref{dualprog2})
is new.
It is similar to the dual program in \cite{Kuosmanen2007} 
but easier to interpret as a comparison of utilities:
let $u$ be a utility function; $a_i=u(y_i)$; $b_j=u(z_j)$; $c_j=u'(z_j)$;
and the constraints ensure that the utility function is concave and increasing.

One benefit of the majorization approach based on Strassen's theorem is that 
it easily extends to vector-valued random variables.
We will consider random variables on $\Reals^k$, with the customary Borel $\sigma$-algebra.
For technical reasons we will assume that the random variables have bounded support.
While this extension is not novel \citep{Sherman1951,Blackwell1953}
and has been used in optimization problems \citep{Armbruster2010dom,haskell:anor13},
we can further extend it to use generalized inequalities, which is novel.
For this, consider a proper (i.e., convex, not containing a line, and closed) cone $\mathcal{K}$.
Then for arbitrary vectors $y$ and $z$ we write the generalized inequality $z\geq_\mathcal{K} y$ if $z-y\in \mathcal{K}$.
This cone also redefines the notion of an increasing function:
$u:\Reals^k\to\Reals$ is an increasing function if $u(z)\geq u(y)$ for all $z\geq_\mathcal{K} y$. 
Increasing functions are of course used to define $\succeq_{icv}$.
Using the nonnegative orthant cone, gives us the usual componentwise inequality and associated definition of an increasing function.
Thus, the precise claim we are proving is as follows.

\newtheorem*{GST}{Generalized Strassen's Theorem}
\begin{GST}
For random vectors $Y$ and $Z$ with bounded support on $\Reals^k$,
$Z \succeq_{icv} Y$ ($Z \succeq_{cv} Y$) 
if and only if
there exists random variables $Y' \eqD Y$
and $Z' \eqD Z$
such that $Z'\geq_\mathcal{K} \E[Y'|Z']$
($Z' = \E[Y'|Z']$) a.s.
\end{GST}

One of the frequent criticisms of stochastic dominance constraints and a motivation for the above generalization
is that the constraints are too conservative 
since they require that $\E[u(Z)]\geq\E[u(Y)]$ for a large class of utility functions.
Here the choice of cone determines the directions in which the utility functions must be increasing,
and this allows us to vary the level of ``conservatism'' in the stochastic dominance constraint.
On one extreme is the cone $\mathcal{K}=\{x : w\cdot x\geq 0\}$, a half-space, 
requiring that the utility function be increasing in all directions making an acute angle to $w$.
In that case $Z\succeq Y$ iff $w\cdot Z \succeq w\cdot Y$ in the sense of scalar stochastic dominance.
Thus, $w$ are the weights at which we trade-off the different components of the outcomes $Y$ and $Z$.
At the other extreme is a very pointed cone $\mathcal{K}=\{w\alpha : \alpha\geq 0\}$ 
only requiring that the utility functions increase in the direction $w$,
and in between is the usual nonnegative orthant cone, $\mathcal{K}=\{x:x\geq 0\}$,
requiring the utility to increase along all coordinate directions.
We can also control the size of the cone (i.e., the level of conservatism) manually, by constructing the convex cone of our choice.

This extension leads to a new connection with conic optimization:
$Z\succeq_{icv} Y$ is equivalent to a feasibility problem in convex optimization problem
involving the cone defining the generalized inequality among vectors (see \eqref{primal}).
For example, this optimization problem is a linear program 
if the generalized inequality is defined by a polyhedral cone,
and it is a semidefinite program,
if we consider the vector space of symmetric matrices 
and the generalized inequality is defined by 
the cone of positive semidefinite matrices.

There is a long history of proofs of Strassen's theorem and extensions \citep{HLP1929,Sherman1951,Blackwell1953,Mirsky1961,Strassen1965,FischerHolbrook1980,EltonHill1992,EltonHill1998,Lindvall1999,Muller2001}.
The theorem was first proved by Hardy, Littlewood, and P\'olya (\citeyear{HLP1929})
for scalar random variables with finite support.
This result was extended to finite-dimensional random vectors by \citet{Sherman1951}.
Most of the proofs appear in the probability literature; use detailed measure-theoretic arguments;
and only consider the case of $\succeq_{cv}$
(which lacks economic intuition because it does not require the utility functions to be increasing).
In contrast, this proof is aimed at researchers in optimization and avoids measure-theoretic arguments.
It is purely geometric, relying essentially on a single use of Farkas' lemma
and is significantly shorter than any of the others.

\section{Proof}\label{sec:proof}
We initially focus on the case of $\succeq_{icv}$
and where $Y$ and $Z$ are scalar and have finite support
on $\{y_i\}$ and $\{z_j\}$, respectively.
In that case, 
\begin{equation}
 \exists\ Y',Z'\ s.t.\ Y' \eqD Y,\ Z' \eqD Y,\ \E[Y'|Z']-Z'\leq 0
\end{equation}
is equivalent term by term to 
\begin{subequations}\label{primal}
\begin{gather}
	\exists\ p_{ij}\ s.t.\quad p_{ij}\geq 0\ \forall i,j,\\
	-\sum_j p_{ij} = -\Pr[Y=y_i]\ \forall i,\quad
	\sum_i p_{ij} = \Pr[Z=z_j]\ \forall j,\quad
	\sum_i p_{ij} (y_i-z_j) \leq 0\ \forall j.\label{primal:ineq}
\end{gather}
\end{subequations}
The equivalence follows from choosing $p_{ij}=\Pr[Y'=y_i,Z'=z_j]$
and applying Bayes' theorem to the last term.
Using slack variables for the inequality in \eqref{primal:ineq}
and then applying Farkas' lemma\footnote{The variant used here is, $Ax=b,\ x\geq 0$ is feasible iff $A\transpose y\geq 0,\ b\transpose y<0$ is infeasible.}, this is equivalent to the infeasible dual system
\begin{subequations}\label{dualprog}
\begin{gather}
	\neg \exists\ a_i,b_j,c_j\ s.t. \\
	a_i \leq b_j + c_j (y_i-z_j)\ \forall i,j, \quad c_j\geq 0\ \forall j,\label{dual:a}\\
	\sum_j b_j \Pr[Z=z_j] - \sum_i a_i \Pr[Y=y_i] < 0.\label{dual:b}
\end{gather}
\end{subequations}
Thus, $Z\succeq_{icv} Y$ can also be written as a linear optimization problem
\begin{subequations}\label{dualprog2}
\begin{align}
	0\leq \min_{a_i,b_j,c_j}\ & \sum_j b_j \Pr[Z=z_j] - \sum_i a_i \Pr[Y=y_i]\\
	s.t.\quad & a_i \leq b_j + c_j (y_i-z_j)\ \forall i,j\\
	& c_j\geq 0\ \forall j.
\end{align}
\end{subequations}
\newtheorem*{lem}{Lemma}
\begin{lem}\label{lem}
The inequalities \eqref{dual:a}--\eqref{dual:b} have a solution iff
there exists functions $u$ and $s$ such that
\begin{subequations}\label{lemeq}
\begin{gather}
	u(x') \leq u(x)+s(x)(x'-x)\ \forall x,x',
	\quad s(x)\geq 0\ \forall x,\label{lemeq:a}\\
	\sum_{x \in \{z_j\}} u(x)\Pr[Z=x] - \sum_{x \in \{y_i\}} u(x)\Pr[Y=x]<0.\label{lemeq:b}
\end{gather}
\end{subequations}
\end{lem}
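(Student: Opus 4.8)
The plan is to prove both implications directly, passing between the finite list of numbers $a_i,b_j,c_j$ and the functions $u,s$ by evaluation in one direction and a pointwise-minimum construction in the other. In one direction, given $u$ and $s$ satisfying \eqref{lemeq}, I would set $a_i := u(y_i)$, $b_j := u(z_j)$, and $c_j := s(z_j)$; then \eqref{lemeq:a} specialized to $x=z_j$ and $x'=y_i$ is exactly $a_i \le b_j + c_j(y_i-z_j)$, the bound $s(z_j)\ge 0$ gives $c_j\ge 0$, and \eqref{lemeq:b} is \eqref{dual:b} verbatim, so \eqref{dual:a}--\eqref{dual:b} holds.

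In the other direction, given $a_i,b_j,c_j$ satisfying \eqref{dual:a}--\eqref{dual:b}, I would define
\[
  u(x) := \min_j\bigl(b_j + c_j(x-z_j)\bigr),
\]
a minimum of finitely many affine functions, hence a finite-valued concave function on $\Reals$; since every slope $c_j$ is nonnegative, $u$ is moreover nondecreasing. Evaluating at $z_j$ gives $u(z_j)\le b_j$, while evaluating at $y_i$ and using \eqref{dual:a} gives $u(y_i)\ge a_i$, so
\[
  \sum_{x\in\{z_j\}} u(x)\Pr[Z=x] - \sum_{x\in\{y_i\}} u(x)\Pr[Y=x]
  \;\le\; \sum_j b_j\Pr[Z=z_j] - \sum_i a_i\Pr[Y=y_i] \;<\; 0,
\]
which is \eqref{lemeq:b}. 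To obtain $s$, at each $x$ I would pick any index $j^\star=j^\star(x)$ attaining the minimum and set $s(x):=c_{j^\star}\ge 0$; then for every $x'$,
\[
  u(x') \le b_{j^\star} + c_{j^\star}(x'-z_{j^\star})
        = u(x) + c_{j^\star}(x'-x) = u(x) + s(x)(x'-x),
\]
so \eqref{lemeq:a} holds as well.

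I do not expect a real obstacle here. The one point needing a little care is that the construction only yields $u(z_j)\le b_j$, not equality; but an upper bound at the points $z_j$ (together with the lower bound at the points $y_i$) is precisely what lets \eqref{lemeq:b} follow from \eqref{dual:b}, so equality is neither available in general nor needed. Similarly, the supergradient $s(x)$ must turn out nonnegative, which is automatic because it is read off as one of the $c_j$'s; and the choice of active index $j^\star(x)$ need not be consistent across $x$, since $s$ is only required to be a function.
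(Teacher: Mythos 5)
Your proof is correct and follows essentially the same route as the paper: the same substitution $a_i=u(y_i)$, $b_j=u(z_j)$, $c_j=s(z_j)$ in one direction, and the same pointwise-minimum construction $u(x)=\min_j\{b_j+c_j(x-z_j)\}$ with the bounds $a_i\le u(y_i)$ and $u(z_j)\le b_j$ in the other. The only (harmless) difference is that you exhibit the supergradient explicitly as an active slope $c_{j^\star(x)}$, which makes the nonnegativity of $s$ immediate, whereas the paper simply invokes a supergradient of the increasing concave function $u$.
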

\begin{proof} 
The ``if'' direction holds by selecting $a_i=u(y_i)$, $b_j=u(z_j)$, and $c_j=s(z_j)$.
Now for the ``only if'' direction, consider a solution of \eqref{dual:a}--\eqref{dual:b}.
Define $u(x)=\min_j\ \{b_j+c_j(x-z_j)\}$.
This function is concave since it is a minimum of affine functions
and increasing since $c_j\geq 0$.
Let $s(x)$ be a supergradient of $u(x)$.
Thus \eqref{lemeq:a} holds.
Since \eqref{dual:a} implies $a_i\leq u(y_i)$
and the definition of $u$ implies $u(z_j)\leq b_j$,
it follows that \eqref{dual:b} implies \eqref{lemeq:b}, proving the claim.
\end{proof}

Note that  \eqref{lemeq:a} 
states that $u$ is an increasing concave utility function 
with supergradient $s$,
and \eqref{lemeq:b} states that $\E[u(Z)]<\E[u(Y)]$.
Thus, \eqref{dualprog} is equivalent to there not existing an 
increasing concave utility function $u$, where $\E[u(Z)]<\E[u(Y)]$.
This is equivalent to 
for all increasing concave utility functions $u$, $\E[u(Z)]\geq\E[u(Y)]$,
which is the definition of $Z\succeq_{icv} Y$ and proves the theorem.

The key to the theorem is duality: 
1) the variables, $p_{ij}$, of the joint probability distribution
are dual to the concavity constraints on the utility function \eqref{dual:a};
and 2) the slopes of the utility function, $c_j$,
are dual to the inequality constraints $Z'\geq \E[Y'|Z']$ in \eqref{primal:ineq}.

The same proof holds for $\succeq_{cv}$
except that now the $c_j$ variables are free.

Essentially the same proof holds for the vector-valued case,
except that we treat the $c_j$ as vectors; 
the product in \eqref{dual:a} as an inner product;
and the constraint $c_j\geq 0$ is meant as $c_j \in \mathcal{K}^*$, 
where $\mathcal{K}^*=\{z:z\cdot y\geq 0\ \forall y\in\mathcal{K}\}$
is the dual cone of $\mathcal{K}$.

We now show the extension to general distributions by more carefully stepping through the application of Farkas' lemma (i.e., going from \eqref{primal} to \eqref{dualprog}) 
using the Hahn-Banach separation theorem.
Let $q_Y$ and $q_Z$ in $L^1(\Reals^k)$ be the distributions of $Y$ and $Z$
and let $S_Y$ and $S_Z$ be their supports.
Thus, $p(y,z)$, the joint distribution of $(Y',Z')$ is in $L^1(\Reals^k\times \Reals^k)$.
We further define slack variables $r(z)\in (L^1(\Reals^k))^k$.
Now we can define the linear operator $A$
\begin{equation}\label{def:A}
A p = \left(-\int_{S_Z} p(y,z) dz,\int_{S_Y} p(y,z) dy,\int_{S_Y} p(y,z)(y-z)1(z \in S_Z) dy \right),\\
\end{equation}
where $1(\cdot)$ denotes the indicator function.
The operator $A$ is bounded because $y-z$ only ranges on some bounded set.
Hence $A$ is a continuous operator.
Hence the following convex cone,
\begin{equation}
C=\{ A p + (0,0,r) : p(y,z)\geq 0\ \forall (y,z), r(z)\in \mathcal{K}\ \forall z, p\in L^1(\Reals^k \times \Reals^k),  r\in (L^1(\Reals^k))^k\},
\end{equation}
is closed.
We define the righthand side of \eqref{primal} as $v=(-q_Y,q_Z,0)$.
Then the separating Hahn-Banach theorem states that exactly one of the following is true: 
$v\in C$ (i.e., \eqref{primal}) or 
there exists a supporting hyperplane (i.e., a continuous linear function) $h$ 
that strictly separates the point $v$ from $C$, that is $h(v)<\alpha\leq h(x)$ for all $x\in C$.
Since $C$ is a cone, we may assume $\alpha=0$.
Since $L^\infty$ is the dual space of $L^1$,
we can define this linear function using a vector $(a,b,c)$ so that $h(x)=(a,b,c)\cdot x$ 
where $a\in L^\infty(\Reals^k)$, $b\in L^\infty(\Reals^k)$ and $c\in (L^\infty(\Reals^k))^k$.
Then $h(v)=(a,b,c)\cdot (-q_Y,q_Z,0)<0$ is equivalent to
\begin{equation}\label{dual:b2}
-\int a(y) q_Y(y) dy + \int b(z) q_Z(z) dz < 0,
\end{equation}
essentially \eqref{dual:b}.
Now $h(x)\geq 0$ for all $x\in C$ iff $(a,b,c)\cdot (Ap)\geq 0$ for all $\{p:p(y,z)\geq 0\ \forall y,z\}$ 
and $c(z)\cdot r(z)\geq 0$ for all $z$ and $r \in \{ r:r(z)\in\mathcal{K}\ \forall z\}$.
Thus $h(x)\geq 0$ for all $x\in C$ iff $(A^* (a,b,c))(y,z) \geq 0$  for all $(y,z)$ 
and $c(z)\in \mathcal{K}^*$ for all $z$.
Here $A^*$ is the adjoint (i.e., transpose) of operator $A$.
Putting the pieces together, $v \in C$ is equivalent to there not existing $(a,b,c)$ such that
\eqref{dual:b2} and
\begin{equation}
-a(y) + b(z) + c(z)\cdot (y-z) \geq 0\ \forall y\in S_Y,z\in S_Z.
\end{equation}
The restrictions to the support come from the indicator functions and the domains of integration in \eqref{def:A}.
This is essentially \eqref{dualprog}.
The rest of the proof follows as before.

\paragraph{Acknowledgments}
I thank Thibaut Barthelemy, Jim Luedtke, and Fatemeh Hamidi Sepehr for suggestions improving the manuscript.

\bibliography{refs}
\end{document}